\newcommand{\set}[1]{\left\{ #1 \right\}}
\newcommand{\isom}{\xrightarrow{\sim}}
\newcommand{\Id}{\mathrm{Id}}
\newcommand{\N}{\mathbb{N}}
\newcommand{\Z}{\mathbb{Z}}
\newcommand{\F}{\mathbb{F}}
\newcommand{\A}{\mathbb{A}}
\newcommand{\PP}{\mathbb{P}}
\newcommand{\GL}{\mathrm{GL}}
\newcommand{\PGL}{\mathrm{PGL}}
\newtheorem{theorem}{Theorem}[section]
\newtheorem{proposition}[theorem]{Proposition}
\newtheorem{lemma}[theorem]{Lemma}
\theoremstyle{definition}
\newtheorem{example}{Example}[section]
\theoremstyle{remark}
\newtheorem{remark}{Remark}
\newtheorem{question}{Question}
\numberwithin{equation}{section}
\begin{document}

\title{Fractional jumps: complete characterisation and an explicit infinite family}


\author{Federico Amadio Guidi}
\address{Mathematical Institute, University of Oxford, Oxford, UK}
\email{federico.amadio@maths.ox.ac.uk}
\author{Giacomo Micheli}
\address{Mathematical Institute, University of Oxford, Oxford, UK}
\email{giacomo.micheli@maths.ox.ac.uk}


\maketitle

\begin{abstract}

In this paper we provide a complete characterisation of transitive fractional jumps by showing that they can only arise from transitive projective automorphisms. Furthermore, we prove that such construction is feasible for arbitrarily large dimension by exhibiting an infinite class of projectively primitive polynomials whose companion matrix can be used to define a full orbit sequence over an affine space.
\end{abstract}

\section{Introduction}

The study of dynamical systems over finite fields have a long history (see for example \cite{bib:BW05, bib:chou95, bib:eich91, bib:EMG09, bib:GSW03, bib:NS02, bib:NS03, bib:TW06}) and is an interesting and still hot topic (see for example \cite{bib:FMS16, bib:GPOS14, bib:HBM17, bib:ost10, bib:OPS10, bib:OS10degree, bib:OS10length, bib:winterhof10}), both for its number theoretical impact in finite fields theory, and for its practical applications, in particular for random number generation. 

Let $q$ be a prime power, let $\F_q$ denote the finite field with $q$ elements, and let $m$ be a positive integer. One of the most interesting questions for applications consists of constructing sequences over the $m$-dimensional affine space over $\F_q$ defined by iterations of rational maps $f:\F_q^m \rightarrow \F_q^m$ satisfying the following conditions:
\begin{enumerate}
\item The period of the recursive sequence $\{f^k(0)\}_{k\in \N}$ they define is ``long''.
\item Their iterations as rational maps have ``low degree growth''.
\end{enumerate}
The motivation for (1) is rather clear: since we generally want to use these sequences for pseudorandom number generation, we do not want to revisit an element twice too soon, or otherwise the entire sequence will repeat. The motivation for (2) is a little more subtle and comes from the uniformity conditions we want the sequence to satisfy (for additional information on this see \cite{bib:OS10degree}).

In \cite{bib:AGM17} we introduced the theory of fractional jumps to address this problem by showing a natural way to build full orbit sequences from projective automorphisms, recovering as a particular case the construction of the Inversive Congruential Generator.

In  this paper we complete the theory of fractional jumps by both proving the uniqueness of the construction, i.e. transitive fractional jumps can only arise from transitive projective automorphisms (except from a couple of degenerate cases which we entirely classify), and by providing an explicit infinite class of projectively primitive polynomials, see definition \cite[Definition 3.1]{bib:AGM17}, whose companion matrix can be used to define a full orbit sequence over $\mathbb{F}_p^{p-1}$, for $p$ a prime.
For this family of fractional jumps, which we call \emph{Artin-Schreier fractional jumps}, we show that the computation of the $(k+1)$-th affine point of the full orbit sequence they define, given the $k$-th one, is as expensive as reading out a look-up table once for each entry.

This latter construction entirely addresses points (1) and (2) above, since the corresponding sequences have full orbit (they cover the entire affine space) and they have zero degree growth. The main technique we use is the fractional jump construction provided in \cite{bib:AGM17}.


\subsection{Notation}

We denote by $\N$ the set of natural numbers, and by $\Z$ the set of integers. Given $a \in \Z$, we let $\Z_{\geq a}$ denote the set of integers $k \in \Z$ such that $k \geq a$.

Given a commutative ring with unity $R$, we let $R^*$ be the (multiplicative) group of invertible elements in $R$.

For a prime power $q$, we denote by $\F_q$ the finite field of cardinality $q$. For $m \in \N$, we denote the $m$-dimensional affine space $\F_q^m$ by $\A^m$, and the $m$-dimensional projective space over $\F_q$ by $\PP^m$. More generally, for any vector space $V$ over $\F_q$ we denote by $\PP V$ the projectivisation of $V$. Also, we denote by $\F_q [x_1, \ldots, x_m]$ the ring of polynomials in $m$ variables with coefficients in $\F_q$. 

For $m \in \N$, let us denote by $\GL_{m}(\F_q)$ the general linear group over $\F_q$, that is the group of $m \times m$ invertible matrices with entries in $\F_q$. Also, we denote by $\PGL_{m} (\F_q)$ the group of automorphisms of $\PP^{m-1}$. Recall that $\PGL_{m} (\F_q)$ can be identified with the quotient group $\GL_{m} (\F_q) / \F_q^* \mathrm{Id}_m$, where $\F_q^* \mathrm{Id}$ is the subgroup of $\F_q^*$-multiples of the identity matrix $\mathrm{Id}_m$. For $M \in \GL_{m} (\F_q)$, we denote by $[M]$ its class in $\PGL_{m} (\F_q)$.

We say that a polynomial $\chi (T) \in \F_q [T]$ of degree $\deg \chi (T) = d$ is \emph{projectively primitive} if it is irreducible and if given any root $\alpha$ in $\F_{q^d} \cong \F_q[T] / (\chi (T))$ the class $\overline{\alpha}$ of $\alpha$ in the quotient group $G = \F_{q^d}^* / \F_q^*$ generates $G$.

Let $X$ be a set, and let $G$ be a group acting on it. For any $x \in X$ we denote by $\mathcal{O}_G (x)$ the orbit of $x$ with respect to the action of $G$ on $X$. Given a bijective map $f : X \rightarrow X$, for any $x \in X$ we set $\mathcal{O}_f (x) = \mathcal{O}_{\langle f \rangle} (x)$, where $\langle f \rangle$ denotes the cyclic subgroup of the group of maps from $X$ to itself generated by $f$, and we define $o_f (x) = |\mathcal{O}_f (x)|$. We say that a bijective map $f : X \rightarrow X$ \emph{acts transitively} on $X$, or simply that it is \emph{transitive}, if for any $x, y \in X$ there exists $k \in \Z$ such that $y = f^k (x)$. Equivalently, $f$ acts transitively on $X$ if and only if for any $x_0 \in X$, the $f$-orbit of $x_0$ has size $o_f (x_0) = |X|$. Finally, we say that a sequence $\set{x_k}_{k \in \N}$ in $X$ has \emph{full orbit} if $\set{x_k \, : \, k \in \N} = X$.

 

\section{Transitive fractional jumps}

For the sake of completeness, we recall the definition of fractional jump of a projective automorphism, as introduced in \cite{bib:AGM17}.

Fix the standard projective coordinates $X_0, \ldots, X_n$ on $\PP^n$, and fix the canonical decomposition 
\begin{equation*} 
\PP^n = U \cup H,
\end{equation*}
where
\begin{align*}
U &= \set{[X_0: \ldots: X_n] \in \PP^n \, : \, X_n \neq 0}, \\
H &= \set{[X_0: \ldots: X_n] \in \PP^n \, : \, X_n = 0}.
\end{align*}

Fix also the isomorphism
\begin{equation*}
\pi : \A^n \isom U, \quad (x_1, \ldots, x_n) \mapsto [x_1: \ldots: x_n: 1].
\end{equation*}

Let now $\Psi$ be an automorphism of $\PP^n$. For $P \in U$, we define the \emph{fractional jump index of $\Psi$ at $P$} as
\begin{equation*}
\mathfrak{J}_P = \min \set{k \in \Z_{\geq 1} \, : \, \Psi^k (P) \in U}.
\end{equation*}

The \emph{fractional jump of $\Psi$} is then defined as the map
\begin{equation*}
\psi : \A^n \rightarrow \A^n, \quad x \mapsto \pi^{-1} \Psi^{\mathfrak{J}_{\pi(x)}} \pi (x).
\end{equation*}

Essentially, the map $\psi$ is defined on a point $x \in \A^n$ as follows: we firstly send $x$ in $\PP^n$ via the canonical map $\pi$, then we iterate $\Psi$ on $\pi (x)$ until we end up with a point in $U$, and finally we take its image in $\A^n$ via $\pi^{-1}$.

When $\Psi$ acts transitively on $\PP^n$, its fractional jump $\psi$ admits an explicit description in terms of multivariate linear fractional transformations. More precisely, we have the following:

\begin{theorem}[{\cite[Section 5]{bib:AGM17}}] \label{explicit_form}
Let $\Psi$ be a transitive automorphism of $\PP^n$, and let $\psi$ be its fractional jump. Then, for $i \in \set{1, \ldots, n+1}$ there exist
\[ a_1^{(i)}, \ldots, a_n^{(i)}, b^{(i)} \in \F_q[x_1, \ldots, x_n] \]
of degree $1$ such that, if
\begin{align*}
U_1 &= \set{x \in \A^n \, : \, b^{(1)} (x) \neq 0}, \\
U_i &= \set{x \in \A^n \, : \, b^{(i)} (x) \neq 0, \text{ and } b^{(j)} (x) = 0, \, \forall j \in \set{1, \ldots, i-1}}, \\ &\text{for } i \in \set{2, \ldots, n+1}, \\
&\text{and} \\
f^{(i)} &= \bigg( \frac{a_1^{(i)}}{b^{(i)}}, \ldots, \frac{a_n^{(i)}}{b^{(i)}} \bigg),  \\
&\text{for } i \in \set{1, \ldots, n+1},
\end{align*}
then $\psi (x) = f^{(i)} (x)$ if $x \in U_i$. Moreover, the rational maps $f^{(i)}$ can be explicitly computed.
\end{theorem}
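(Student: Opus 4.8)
\emph{Proof proposal.} The plan is to push everything down to linear algebra over $\F_q$ and to isolate the single genuinely non-formal ingredient. Fix a lift $M \in \GL_{n+1}(\F_q)$ with $\Psi = [M]$. Since $\Psi$ is transitive, its characteristic polynomial $\chi_M$ is projectively primitive (cf. \cite{bib:AGM17}), hence irreducible of degree $n+1$; thus $\F_q^{n+1}$ with $M$ acting is a simple $\F_q[T]$-module, every nonzero (co)vector is cyclic for $M$, and identifying $\F_q^{n+1} \cong \F_q[T]/(\chi_M)$ the matrix $M$ acts as multiplication by a root $\alpha$ whose class in $G = \F_{q^{n+1}}^*/\F_q^*$ has order $N := (q^{n+1}-1)/(q-1)$. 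For $x = (x_1,\dots,x_n) \in \A^n$ write $w(x) = (x_1,\dots,x_n,1)^{\mathrm{T}}$ and, for $k \in \Z_{\geq 1}$, introduce the affine-linear polynomials $v_1^{(k)},\dots,v_{n+1}^{(k)} \in \F_q[x_1,\dots,x_n]$ determined by
\begin{equation*}
\bigl( v_1^{(k)}(x), \dots, v_{n+1}^{(k)}(x) \bigr)^{\mathrm{T}} = M^k\, w(x).
\end{equation*}
Then I would \emph{define} $b^{(i)} := v_{n+1}^{(i)}$ and $a_j^{(i)} := v_j^{(i)}$ for $i \in \set{1,\dots,n+1}$, $j \in \set{1,\dots,n}$, and show these choices do the job.

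\emph{Main step.} The hard part, and the only place where transitivity is genuinely used, will be the bound $\mathfrak{J}_P \leq n+1$ for every $P \in U$, equivalently that $b^{(1)},\dots,b^{(n+1)}$ have no common zero on $\A^n$. Writing $e^{\mathrm{T}} = (0,\dots,0,1)$, the last entry of $M^k w(x)$ is $(e^{\mathrm{T}}M^k)\,w(x)$, so a common zero $x$ would be annihilated by the covectors $e^{\mathrm{T}}M,\dots,e^{\mathrm{T}}M^{n+1}$; after right-multiplication by $M^{-1}$ these span the same subspace of $(\F_q^{n+1})^*$ as $e^{\mathrm{T}},e^{\mathrm{T}}M,\dots,e^{\mathrm{T}}M^{n}$, which is \emph{all} of $(\F_q^{n+1})^*$ because $\chi_M$ is irreducible (so $e^{\mathrm{T}}$ is a cyclic covector for $M$). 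This forces $w(x) = 0$, contradicting that its last coordinate is $1$.

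\emph{Assembling.} Granting the main step, the rest is bookkeeping. For every $x \in \A^n$ the vector $M^k w(x)$ is nonzero, hence $\Psi^k(\pi(x)) = [v_1^{(k)}(x):\dots:v_{n+1}^{(k)}(x)]$, which lies in $U$ exactly when $b^{(k)}(x) \neq 0$; therefore $\mathfrak{J}_{\pi(x)} = \min\set{k \geq 1 : b^{(k)}(x) \neq 0}$, which is at most $n+1$ by the main step. Unwinding the definitions of the $U_i$ then gives $x \in U_i \iff \mathfrak{J}_{\pi(x)} = i$ and $\A^n = \bigsqcup_{i=1}^{n+1} U_i$, so that on $U_i$
\begin{equation*}
\psi(x) = \pi^{-1}\Psi^{\mathfrak{J}_{\pi(x)}}\pi(x) = \pi^{-1}\bigl[ v_1^{(i)}(x):\dots:v_{n+1}^{(i)}(x) \bigr] = \left( \frac{a_1^{(i)}(x)}{b^{(i)}(x)},\dots,\frac{a_n^{(i)}(x)}{b^{(i)}(x)} \right) = f^{(i)}(x).
\end{equation*}

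\emph{Degree and explicitness.} To see that each $b^{(i)}$ has degree exactly $1$ rather than merely $\leq 1$, note that if $b^{(i)} = v_{n+1}^{(i)}$ were constant then the last row of $M^i$ would be $(0,\dots,0,c)$, so $e^{\mathrm{T}}$ would be an eigenvector of $(M^i)^{\mathrm{T}}$ and $\alpha^i \in \F_q^*$, i.e. $N \mid i$ — impossible since $1 \leq i \leq n+1 < N$. The $a_j^{(i)}$ are affine-linear by construction, and the $f^{(i)}$ are obtained concretely by forming the powers $M, M^2, \dots, M^{n+1}$ and reading off their rows, which settles the final assertion.
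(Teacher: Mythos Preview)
Your argument is correct. Note that the paper itself does not supply a proof of this theorem: it is quoted from \cite[Section~5]{bib:AGM17}, and the only indication of the argument is Remark~\ref{Uideflastrows}, which records exactly your construction --- the $b^{(i)}$ and $a_j^{(i)}$ are read off from the rows of $M^i$ after dehomogenising. So your approach is the intended one; you have simply filled in the details that the present paper leaves to the reference, most notably the uniform bound $\mathfrak{J}_P\le n+1$ via cyclicity of the covector $e^{\mathrm T}$, and the observation that $b^{(i)}$ cannot degenerate to a constant because $\alpha^i\in\F_q^*$ would contradict projective primitivity for $1\le i\le n+1<N$.

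One microscopic remark on wording: in the main step, ``span the same subspace'' is literally true here only because that subspace turns out to be all of $(\F_q^{n+1})^*$; the cleaner statement is that right-multiplication by $M^{-1}$ is an isomorphism, so $\{e^{\mathrm T}M,\dots,e^{\mathrm T}M^{n+1}\}$ and $\{e^{\mathrm T},\dots,e^{\mathrm T}M^{n}\}$ span subspaces of equal dimension, namely $n+1$. This does not affect the validity of the argument.
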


\begin{remark}
The reader should notice that the $b^{(i)}$ are equal on each component, and therefore the evaluation of $\psi$ only requires one inversion in the base field.
\end{remark}

\begin{remark}\label{Uideflastrows}
Another important fact to notice is that the definition of $\psi$ depends uniquely on the rows of $M^i$'s, where $M\in \GL_{n+1}(\F_q)$ is any matrix in the class of $\Psi$. In fact, notice that if the last row of $M^i$ is 
$(m^{(i)}_{n+1, 1},\dots ,m^{(i)}_{n+1,n+1})$, then  
$b^{(i)}= m^{(i)}_{n+1,n+1}+\sum^{n}_{j=1}{m^{(i)}_{n+1,j} x_j }$. 
On the other hand, for any $j\in \{1,\dots, n\}$, if $(m^{(i)}_{j,1},\dots ,m^{(i)}_{j,n+1})$ is the $j$-th row of $M^i$, then $a_j^{(i)}=m^{(i)}_{j,n+1}+\sum^{n}_{j=1}{m^{(i)}_{j,n+1} x_j }$.
What is done here is essentially dehomogenising the projective map induced by the class of $M^i$ and then restricting that to the affine points.
\end{remark}

We now provide a simple example to fix the ideas.

\begin{example}
Let $q = 5$ and $n = 2$. Consider the automorphism of $\PP^2$ defined by
\[ \Psi ([X_0 : X_1 : X_2]) = [3 X_0 + 2 X_1 + X_2 : 3 X_0 + 3 X_1 + X_2 : 3 X_1 + 4 X_2 ].\]
A representative for $\Psi$ in $\GL_3 (\F_5)$ is given by
\[ M = \begin{pmatrix}
3 & 2 & 1 \\
3 & 3 & 1 \\
0 & 3 & 4
\end{pmatrix},\]
whose characteristic polynomial
\[ \chi_M (T) = T^3 + 4 T + 3\]
is projectively primitive, since it is irreducible, and $(5^3-1)/(5-1) = 31$ is prime. By \cite[Theorem 3.4]{bib:AGM17}, it follows that $\Psi$ acts transitively on $\PP^2$, and then Theorem \ref{explicit_form} applies to the fractional jump $\psi$ of $\Psi$. Direct computations show that for
\begin{align*}
U_1 &= \set{(x_1, x_2) \in \A^2 \, : \, 3 x_2 + 4 \neq 0}, \\
U_2 &= \set{(x_1, x_2) \in \A^2 \, : \, 3x_2 + 4 = 0, \text{ and } 4 x_1 + x_2 + 4 \neq 0}, \\
U_3 &= \set{(1, 2)},
\end{align*}
and
\begin{align*}
f^{(1)} (x_1, x_2) &= \bigg( \frac{3x_1 + 2 x_2 + 1}{3 x_2 + 4}, \frac{3 x_1 + 3 x_2 + 1}{3 x_2 + 4} \bigg), \\
f^{(2)} (x_1, x_2) &= \bigg( \frac{4}{4 x_1 + x_2 + 4}, \frac{3 x_1 + 3 x_2}{4 x_1 + x_2 + 4} \bigg), \\
f^{(3)} (x_1, x_2) &= \bigg( \frac{2 x_2 + 1}{3 x_2 + 1}, \frac{3 x_1 + 1}{3 x_2 + 1} \bigg),
\end{align*}
we have that $\set{U_i }_{i \in \set{1, 2, 3}}$ is a disjoint covering of $\A^2$ such that $\psi (x) = f^{(i)} (x)$ if $x \in U_i$.
\end{example}

The purpose of this section is to show that transitive fractional jumps can only arise from transitive projective automorphisms, except from some very special cases, which can be entirely classified. Before proving the main theorem, let us recall a standard linear algebra fact, which follows from the results in \cite[XIV, \S2, \S3]{bib:lang02}.

\begin{lemma} \label{existence_cyclic_vector}
Let $\Bbbk$ be a field, let $V$ be a finite dimensional vector space over $\Bbbk$, and let $M$ be a $\Bbbk$-linear endomorphism of $V$. Assume that the minimal polynomial and the characteristic polynomial of $M$ are equal. Then, there exists $v_0 \in V$ such that the set $\set{M^k v_0 \, : \, k \in \Z_{\geq 0}}$ spans $V$ over $\Bbbk$.
\end{lemma}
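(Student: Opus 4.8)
The plan is to pass from linear algebra to module theory. Regard $V$ as a module over the principal ideal domain $R = \Bbbk[T]$, where $T$ acts as $M$; since $\dim_{\Bbbk} V < \infty$, the annihilator of $V$ in $R$ is a nonzero ideal, so $V$ is a finitely generated torsion $R$-module. By the structure theorem for finitely generated modules over a PID, in invariant-factor form (this is precisely the content of \cite[XIV, \S2, \S3]{bib:lang02}), there is an isomorphism of $R$-modules $V \cong R/(d_1) \oplus \cdots \oplus R/(d_r)$ with $d_1 \mid d_2 \mid \cdots \mid d_r$ monic non-units.

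Next I would translate the two classical polynomial invariants of $M$ in terms of this decomposition: the minimal polynomial of $M$ is the largest invariant factor $d_r$, while the characteristic polynomial is the product $d_1 \cdots d_r$. Comparing degrees gives $\deg d_r = \deg(\text{char. poly.}) = \dim_{\Bbbk} V = \sum_{i=1}^{r} \deg d_i$. The hypothesis that the minimal and characteristic polynomials coincide therefore forces $\deg d_i = 0$ for every $i < r$, so the summands $R/(d_i)$ with $i < r$ are trivial and $V \cong R/(d_r)$ is a cyclic $R$-module.

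Finally, I would let $v_0 \in V$ be the element corresponding to the class $1 + (d_r)$ under this isomorphism. The $R$-submodule generated by $v_0$ is all of $R/(d_r) \cong V$, and as a $\Bbbk$-vector space $R/(d_r)$ is spanned by the classes of $1, T, \ldots, T^{\deg d_r - 1}$; transporting back through the isomorphism, $\set{M^k v_0 \, : \, k \in \Z_{\geq 0}}$ spans $V$ over $\Bbbk$, which is the assertion.

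I do not expect a genuine obstacle here: this is the elementary ``cyclic vector'' fact underlying the rational canonical form, and the only point requiring care is the bookkeeping that identifies the top invariant factor $d_r$ with the minimal polynomial and the product of the $d_i$ with the characteristic polynomial — exactly the computations performed in \cite[XIV, \S2, \S3]{bib:lang02}. If one prefers to avoid quoting the full structure theorem, an alternative is the primary decomposition $V = \bigoplus_i \ker p_i(M)^{e_i}$ associated to the factorisation $\chi_M(T) = \prod_i p_i(T)^{e_i}$ into distinct irreducibles: equality of the minimal and characteristic polynomials forces $\dim_{\Bbbk} \ker p_i(M)^{e_i} = e_i \deg p_i = \deg(\text{min. poly. of } M|_{\ker p_i(M)^{e_i}})$, so each summand is cyclic, and a generator of $V$ is obtained by summing cyclic generators of the summands; but the module-theoretic route is the cleaner one to write down.
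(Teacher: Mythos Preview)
Your argument is correct and is precisely the route the paper has in mind: the paper does not give an independent proof of this lemma but simply records it as a ``standard linear algebra fact, which follows from the results in \cite[XIV, \S2, \S3]{bib:lang02}'', i.e.\ the structure theorem for $\Bbbk[T]$-modules and the identification of the invariant factors with the minimal and characteristic polynomials. You have spelled out exactly that deduction, so your proposal and the paper's treatment coincide.
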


We also need the following lemma:

\begin{lemma} \label{orders_in_quotients}
Let $p(T) \in \F_q [T]$ be an irreducible polynomial, and let $e \geq 1$ be a positive integer. Let $[T]$ be the class of $T$ in $\Gamma = (\F_q[T] / (p(T)^e))^*$, and let $[[T]]$ be the class of $T$ in $G = \Gamma / \F_q^*$. Then, the order of $[[T]]$ in $G$ equals the order of $[T]^{q-1}$ in $\Gamma$.
\end{lemma}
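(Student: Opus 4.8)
The plan is to relate the three relevant cyclic orders directly via the surjection $\Gamma \twoheadrightarrow G$ with kernel the image $\overline{\F_q^*}$ of $\F_q^*$ in $\Gamma$. First I would observe that since $p(T)$ is irreducible, $\F_q[T]/(p(T)^e)$ is a local ring and the composite $\F_q^* \hookrightarrow \F_q[T] \twoheadrightarrow (\F_q[T]/(p(T)^e))^* = \Gamma$ is injective, because distinct scalars remain distinct modulo $p(T)^e$ (their difference is a nonzero constant, hence a unit, not in $(p(T)^e)$). So $\F_q^*$ embeds as a subgroup of order $q-1$ in $\Gamma$, and $G = \Gamma/\overline{\F_q^*}$ makes sense with $|G| = |\Gamma|/(q-1)$.

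Write $d$ for the order of $[[T]]$ in $G$ and $f$ for the order of $[T]^{q-1}$ in $\Gamma$. The order of $[[T]]$ in $G$ is the least $d \geq 1$ with $[T]^d \in \overline{\F_q^*}$, i.e.\ $[T]^d = [\lambda]$ for some $\lambda \in \F_q^*$. The key step is then: if $[T]^d = [\lambda]$, then raising to the $(q-1)$-th power gives $[T]^{d(q-1)} = [\lambda^{q-1}] = [1]$, so $f \mid d(q-1)$; conversely $[T]^{f(q-1)} = ([T]^{q-1})^f = [1]$, and since $[T]^{q-1} \in \Gamma$ is a $(q-1)$-st power it certainly lies in\dots no, that is not quite the right direction. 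The cleaner route: I claim $d(q-1) = f \cdot \gcd$-type relation collapses because of a coprimality fact. Here is the honest argument. Let $n = |\Gamma|$. One knows $\Gamma \cong \F_q^* \times (1 + (p(T)))$ as the unit group of a local ring whose residue field is $\F_{q^{\deg p}}$; actually more useful is that $\overline{\F_q^*}$ is a \emph{direct factor} complement: the Teichm\"uller-type lift shows $\Gamma \cong \F_{q^{\deg p}}^* \times P$ where $P = 1+(p(T))$ is a $p$-group, and $\F_q^* \subseteq \F_{q^{\deg p}}^*$. The subtlety I must handle is precisely that $\overline{\F_q^*}$ need not be a direct factor of $\Gamma$, only of the first factor.

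The argument I would actually write: factor $[T] = ([T]_0, [T]_1)$ under $\Gamma \cong \F_{q^{\deg p}}^* \times P$. Then $[T]^d \in \overline{\F_q^*} \subseteq \F_{q^{\deg p}}^* \times \{1\}$ forces $[T]_1^d = 1$, and $[T]_0^d \in \F_q^*$. Likewise the order $f$ of $[T]^{q-1}$ splits as $f = \mathrm{lcm}(f_0, f_1)$ where $f_1 = \mathrm{ord}([T]_1^{q-1}) = \mathrm{ord}([T]_1)$ since $\gcd(q-1,|P|)=1$ ($|P|$ is a power of $p$), and $f_0 = \mathrm{ord}([T]_0^{q-1})$ in the cyclic group $\F_{q^{\deg p}}^*$. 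In a cyclic group, the order of $[T]_0^{q-1}$ equals the order of the image of $[T]_0$ in $\F_{q^{\deg p}}^*/\F_q^*$ (standard: for cyclic $C$ and subgroup $H$ of index $m$, $\mathrm{ord}(x^m) = \mathrm{ord}(xH)$ whenever $H$ is the unique index-$m$ subgroup, here $\F_q^*$ is exactly the subgroup of $(q-1)$-st powers since $(q-1) \mid (q^{\deg p}-1)$). Combining, $f_0 = \mathrm{ord}([T]_0 \bmod \F_q^*)$ and $f_1 = \mathrm{ord}([T]_1)$, whose lcm is exactly the least $d$ with $[T]_0^d \in \F_q^*$ and $[T]_1^d = 1$, i.e.\ with $[T]^d \in \overline{\F_q^*}$, which is $d$. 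Hence $f = d$.

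The main obstacle is the bookkeeping around the fact that $\F_q^*$ sits inside only the prime-to-$p$ part of $\Gamma$ but its image $\overline{\F_q^*}$ is being quotiented out of all of $\Gamma$; making the decomposition $\Gamma \cong \F_{q^{\deg p}}^* \times P$ explicit (or, alternatively, arguing purely with coprimality: $\mathrm{ord}([T])$ in $\Gamma$ decomposes as (prime-to-$p$ part) times ($p$-part), and raising to the $(q-1)$-st power only affects the prime-to-$p$ part, on which it behaves like quotienting by $\F_q^*$) is the crux. Everything else is the routine cyclic-group identity $\mathrm{ord}(x^{[C:H]}) = \mathrm{ord}(xH)$ applied in $\F_{q^{\deg p}}^*$.
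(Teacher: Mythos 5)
Your proof is correct, but it takes a genuinely different route from the paper's. The paper argues entirely inside the ring: it shows by a direct computation (write $s = c + k(T)p(T)$, multiply $s^{q-1}=1$ by $s$, and use that $(k(T)p(T))^{q-1}-1$ is a unit) that the $(q-1)$-torsion of $\Gamma$ is exactly $\F_q^*$, after which both divisibilities between the two orders are one-line consequences. You instead invoke the structure of the unit group of the finite local ring, $\Gamma \cong \F_{q^{\deg p}}^* \times P$ with $P = 1+(p(T))$ a group of order a power of the characteristic, and track orders componentwise; your decomposition in fact re-proves the paper's key claim as a corollary (the $(q-1)$-torsion of $\F_{q^{\deg p}}^* \times P$ is $\F_q^* \times \{1\}$ by coprimality). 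Your route is more structural and arguably more illuminating; the paper's is more elementary and self-contained. One slip in your write-up, though it does not affect validity: the cyclic-group identity you need is $\mathrm{ord}(x^m) = \mathrm{ord}(xH)$ where $H$ is the unique subgroup of \emph{order} $m$ (the kernel of the $m$-th power map), not of index $m$; correspondingly, $\F_q^*$ is the kernel of the $(q-1)$-st power map on $\F_{q^{\deg p}}^*$, not its image (the image has order $(q^{\deg p}-1)/(q-1)$). The instance you actually use, $\mathrm{ord}([T]_0^{q-1}) = \mathrm{ord}([T]_0 \F_q^*)$, is the correct one.
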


\begin{proof}
Let $k$ be the order of $[[T]]$ in $G$ and let $h$ be the order of $[T]^{q-1}$ in $\Gamma$. Then, $[[T]]^k = 1$ in $G$ gives $[T]^k \in \F_q^*$. But then $1 = ([T]^k)^{q-1} = ([T]^{q-1})^k$, and so $h \mid k$.

On the other hand, let us firstly show that if $s \in \F_q[T] / (p(T)^e)$ satisfies $s^{q-1}-1 = 0$, then $s \in \F_q^*$. In fact, by reducing $s$ modulo $p(T)$ we get that
\[s = c + k(T) p(T) \mod p(T)^e, \quad \text{ for } c \in \F_q^* \text{ and } k(T) \in \F_q[T]. \]
Now, by multiplying the equation $s^{q-1} - 1 = 0$ by $s$, and plugging in the above special form for $s$, we get
\begin{align*}
(c + k(T) p(T))^q - (c + k(T) p(T)) &\equiv (k(T) p(T))^q - k(T) p(T) \\
&\equiv k(T) p(T) ((k(T) p(T))^{q-1} - 1) \equiv 0 \mod p(T)^e.
\end{align*}
But now $k(T) p(T))^{q-1} - 1$ is invertible modulo $p(T)^e$, and so $k(T) p (T)$ must be zero modulo $p(T)^e$, which forces $s$ to be $c$ modulo $p(T)^e$.

It then follows that $1 = ([T]^{q-1})^h = ([T]^h)^{q-1}$ in $\Gamma$ gives $[T]^h \in \F_q^*$, from which we get $[[T]]^h = 1$ in $G$, and so $k \mid h$.
\end{proof}

The main result of this section is the following:

\begin{theorem}
Let $\Psi$ be an automorphism of $\PP^n$ and let $\psi$ be its fractional jump. Then, $\Psi$ acts transitively on $\PP^n$ if and only if $\psi$ acts transitively on $\A^n$, unless $q$ is prime and $n = 1$, or $q = 2$ and $n = 2$, with explicit examples in both cases.
\end{theorem}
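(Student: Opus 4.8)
The plan is to prove the two implications separately. The forward direction (transitivity of $\Psi$ implies transitivity of $\psi$) is essentially already contained in the construction from \cite{bib:AGM17}: if $\Psi$ is transitive on $\PP^n$, then starting from any affine point $\pi(x)$ and iterating $\Psi$ one visits every point of $\PP^n$, in particular every point of $U$, and the fractional jump $\psi$ is precisely the "induced return map" on $U$; since $U$ is visited exactly $|U| = |\A^n|$ times within one full $\Psi$-orbit, $\psi$ has a single orbit of size $|\A^n|$. So I would dispose of this direction quickly, citing Theorem \ref{explicit_form} and the orbit-counting already done in \cite{bib:AGM17}.

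The substantive direction is the converse: if $\psi$ is transitive on $\A^n$, then $\Psi$ is transitive on $\PP^n$ (outside the listed exceptions). Here the idea is to argue by contradiction, or rather to extract structural information from transitivity of $\psi$. Fix $M \in \GL_{n+1}(\F_q)$ representing $\Psi$. A transitive $\psi$ forces the $\Psi$-orbit of any affine point to meet $U$ in all of $\A^n$ points, hence the $\Psi$-orbit of a point $P \in U$ has size at least $|U| = q^n$; combined with $|\PP^n| = q^n + q^{n-1} + \cdots + 1$, the order of $[M]$ in $\PGL_{n+1}(\F_q)$ — equivalently, the order of $[M]$ as a permutation of $\PP^n$ restricted to that orbit — is squeezed into a narrow range. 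The natural tool is to look at the cyclic group generated by $M$ acting on $\F_q^{n+1}$: by Lemma \ref{existence_cyclic_vector}, if the minimal and characteristic polynomials of $M$ coincide there is a cyclic vector $v_0$, and then the $\langle M \rangle$-orbit structure on lines is governed by the factorisation of $\chi_M(T)$ and, via Lemma \ref{orders_in_quotients}, by orders in the unit groups of $\F_q[T]/(p(T)^e)$ modulo $\F_q^*$. The strategy is: (i) show that transitivity of $\psi$ forces $M$ to be cyclic (min poly $=$ char poly) — otherwise some proper $M$-invariant subspace produces a proper $\psi$-invariant affine subset or a too-short return orbit; (ii) for cyclic $M$, reduce to understanding $\langle [M] \rangle$ on $\PP^n$ in terms of $\chi_M = \prod p_i^{e_i}$; (iii) show the return map to $U$ is transitive only if the whole of $\PP^n$ is covered by a single $[M]$-orbit, which by the structure theory happens exactly when $\chi_M$ is projectively primitive (the transitive case), \emph{or} in the sporadic low-dimensional situations where the numerics collapse — namely $n=1$ with $q$ prime (where $\PGL_2(\F_p)$ has elements behaving degenerately on $\PP^1 = \A^1 \cup \{\infty\}$) and $q=2$, $n=2$ (where $|\PP^2| = 7$ and $|\A^2| = 4$ allow an exceptional configuration).

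The main obstacle, I expect, is step (i)–(iii) done carefully enough to \emph{pin down exactly} the two exceptional families rather than merely "some exceptions": one must analyse what $\psi$ does when $\chi_M$ has a nontrivial factorisation or is primitive-but-not-projectively-primitive, and verify that in all dimensions $n \geq 2$ over $\F_q$ with $q > 2$ (and in $n=1$ with $q$ non-prime) a non-projectively-primitive $M$ yields a $\psi$ with more than one orbit, while producing the explicit generators $\Psi$ in the two surviving exceptional cases. The bookkeeping of orbit sizes of $\langle [M]\rangle$ on $H$ versus $U$ — essentially counting how the return-time function $\mathfrak{J}$ distributes the $\Psi$-orbit between the hyperplane and the affine chart — is where the real work lies, and Lemma \ref{orders_in_quotients} is the key arithmetic input that converts this into a clean statement about $\mathrm{ord}([T]^{q-1})$ in the relevant unit group.
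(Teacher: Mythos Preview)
Your outline has the right architecture and correctly identifies the main tools---orbit-size bounds to force $\mu_M=\chi_M$, the cyclic vector lemma, and Lemma~\ref{orders_in_quotients} to control orders in $\F_q[M]^*/\F_q^*$---and this is indeed the paper's approach. But there is a missing organising idea in your steps (ii)--(iii). You propose to ``analyse what $\psi$ does when $\chi_M$ has a nontrivial factorisation'' and to track ``how the return-time function $\mathfrak{J}$ distributes the $\Psi$-orbit between the hyperplane and the affine chart,'' yet you never name the object that makes this tractable: the $M$-invariant subspace
\[
W=\{v\in\F_q^{n+1}:(M^iv)_{n+1}=0\ \text{for all }i\in\Z\},
\]
i.e.\ the lines whose $\Psi$-orbit stays in $H$ forever. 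In the paper $W$ does all the structural work. Showing $\PP W=\emptyset$ immediately gives transitivity of $\Psi$, since every point of $H$ eventually enters $U$ and is swept up by the already-transitive return map. Assuming $\PP W\neq\emptyset$ makes $W$ a proper nonzero $\F_q[T]$-submodule, so its annihilator is a proper factor of $\mu_M$; one then shows $\mu_M$ is irreducible to reach a contradiction. The cyclic-vector step you anticipate is used here, but with $W$ as the target: for every proper divisor $d(T)$ of $\mu_M$, the orbit-size bound forces $d(M)v_0\in W$ (otherwise $d(M)v_0$ would give an affine point with too-short $\psi$-orbit), and then if two coprime primes divided $\mu_M$, B\'ezout would place $v_0\in W$, a contradiction---hence $\mu_M=p(T)^e$.

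Without $W$ or an equivalent, your plan to pass from ``$\chi_M$ reducible'' to ``$\psi$ not transitive'' has no mechanism: a proper $M$-invariant subspace need not produce a proper $\psi$-invariant affine subset, precisely because it may sit entirely inside $H$. The relevant dichotomy is not whether invariant subspaces exist, but whether they are confined to $H$. You are right that the numerical endgame isolating the two exceptions is the remaining labour; the paper carries it out via the explicit bound $q^n\le A(q,e,f)=q^{\lceil\log_q e\rceil}(q^f-1)/(q-1)$ on the order of $[[T]]$, solving $q^{ef-1}\le A(q,e,f)$ to force first $f=1$ and then $(q,e)\in\{(2,3)\}\cup\{(\text{prime},2)\}$.
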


\begin{proof}
For any $q$ and $n$, it is immediate to show that if $\Psi$ is transitive then $\psi$ is transitive.
In the case of $q$ prime and $n = 1$ or $q = 2$ and $n = 2$ there exist explicit examples of transitive affine transformations, namely
\begin{align*}
\varphi_1 (x) &= x + 1, \quad &\text{if } q \text{ is prime and } n=1, \\
\varphi_2 (x_1, x_2) &= \begin{pmatrix} 1 & 1 \\ 0 & 1 \end{pmatrix} \cdot \begin{pmatrix} x_1 \\ x_2 \end{pmatrix} + \begin{pmatrix} 1 \\ 1 \end{pmatrix}, \quad &\text{if } q=2 \text{ and } n=2.
\end{align*}
Define then
\begin{align*}
\Phi_1 ([X_0 : X_1]) &= [X_0 + X_1 : X_1], \quad &\text{if } q \text{ is prime and } n=1, \\
\Phi_2 ([X_0 : X_1 : X_2]) &= [X_0 + X_1 + X_2 : X_1 + X_2 : X_2], \quad &\text{if } q=2 \text{ and } n=2.
\end{align*}
Clearly, $\varphi_i$ is the fractional jump of $\Phi_i$ for $i \in \set{1, 2}$. However, it is immediate to see that $\Phi_i$ fixes the hyperplane at infinity, so cannot be transitive for $i \in \set{1, 2}$.


Let us now assume that we are not in the above pathological cases, and that $\psi$ is transitive. Write $\Psi = [M] \in \PGL_{n+1} (\F_q)$ for some $M \in \GL_{n+1} (\F_q)$, and let $\chi_M (T), \mu_M (T) \in \F_q [T]$ be respectively the characteristic polynomial and the minimal polynomial of $M$. The vector space $V = \F_q^{n+1}$ over $\F_q$ has a natural structure of $\F_q[T]$-module given by
\[ f(T) v = f(M) v, \quad \text{for } f(T) \in \F_q [T], \text{ and } v \in V.\]
Let $\F_q [M]$ be the subalgebra of the algebra of $\F_q$-linear endomorphisms of $V$ generated by $M$, and let $G_\Psi$ be the quotient (multiplicative) group $\F_q [M]^* / \F_q^*$.

We firstly prove that $\mu_M (T) = \chi_M (T)$. Assume by contradiction $\mu_M (T) \neq \chi_M (T)$, so that $\deg \mu_M (T) \leq n$. Then, given any $P \in U$, and any $x \in \A^n$ such that $P = \pi (x)$, we have
\begin{align*}
q^n = o_\psi (x) &\leq o_\Psi (P) \\
&\leq |G_\Psi| \\
&\leq \frac{q^n - 1}{q-1} < q^n,
\end{align*}
a contradiction, which implies $\mu_M (T) = \chi_M (T)$.


Define now
\[N = \set{P \in H \, : \, \Psi^i (P) \in H, \, \forall i \in \Z}.\]
We want to show that $N = \emptyset$. Note that this would immediately imply that $\Psi$ is transitive. To see this, given any $P, Q \in \PP^n$, if $N = \emptyset$ then there exist $i, j \in \Z$ such that $P' = \Psi^i (P), Q' = \Psi^j (Q) \in U$. Let $x', y' \in \A^n$ be such that $P' = \pi (x')$ and $Q' = \pi (y')$. As $\psi$ acts transitively on $\A^n$ by hypothesis, there exists $\ell \in \Z$ such that $y' = \psi^\ell (x')$. Then, by the definition of $\psi$, there exists an integer $k \geq \ell$ such that $Q' = \Psi^k (P')$. In conclusion, we get $Q = \Psi^{i+k-j} (P)$, and so we have that if $N = \emptyset$ then $\Psi$ is transitive.

Assume by contradiction that $N \neq \emptyset$. Define
\[ W = \set{v \in V \, : \, (M^i v)_{n+1} = 0, \, \forall i \in \Z},\]
where $(M^i v)_{n+1}$ denotes the $(n+1)$-th component of $M^i v$. It is immediate to check that $W$ is a subspace of $V$, and that $N = \PP W$. Also, $W$ is clearly $\F_q [M]$-invariant, and so it is an $\F_q[T]$-submodule of $V$. 
Let $g(T) \in \F_q [T]$ is a monic generator of the annihilator $\mathrm{Ann}_{\F_q [T]} (W)$ of $W$ as $\F_q [T]$-module. We have that $g(T) \mid \mu_M (T)$, since $\mu_M (M) w = 0$ for any $w \in W$. Also, $g(T) \neq 1$ as $N \neq \emptyset$ by assumption, and $g(T) \neq \mu_M (T)$, since $N \subseteq H$ gives $\deg g(T) \leq n$. This shows that if $N \neq \emptyset$ the $\mu_M (T)$ is reducible.

Let us now prove instead that $\mu_M (T)$ is irreducible, so that we get a contradiction. We firstly prove that $\mu_M (T) = p(T)^e$ for some irreducible polynomial $p(T) \in \F_q [T]$ and some integer $e \geq 1$.

Since $\mu_M (T) = \chi_M (T)$, then by Lemma \ref{existence_cyclic_vector} we know that there exists $v_0 \in V$ such that the set $\set{M^k v_0 \, : \, k \in \Z_{\geq 0}}$ spans $V$ over $\F_q$. Clearly, $v_0 \notin W$, since otherwise we would have $W = V$, as $W$ is $\F_q[M]$-invariant, which is a contradiction as $N \subseteq H$. We show now that $d(M) v_0 \in W \setminus \set{0}$ for any $d (T) \in \F_q[T]$ such that $d(T) \mid \mu_M (T)$, and $d(T) \neq 1, \mu_M (T)$. Let $d(T)$ be any of such polynomials. Clearly $d(M) v_0 \neq 0$, as otherwise the span of $\set{M^k v_0 \, : \, k \in \Z_{\geq 0}}$ over $\F_q$ would have dimension less or equal than $\deg d(T)$, which is less or equal than $n$ by assumption. Define then $W_d$ to be the span of $\set{M^k d(M) v_0 \, : \, k \in \Z_{\geq 0}}$ over $\F_q$. It is immediate to see that $W_d$ is an $\F_q[M]$-invariant subspace of $V$ of dimension less or equal than $\deg (\mu_M (T) / d(T))$, which is less or equal than $n$ by assumption. Assume by contradiction $d(M) v_0 \notin W$. Then, if we let $P_d$ be the class of $d(M) v_0$ in $\PP^n$, we have $P_d \notin N$, and so there exists $i \in \Z$ such that $Q_d = \Psi^i (P_d) \in U$. Let $y_d \in \A^n$ be such that $Q_d = \pi (y_d)$. Then, 
\begin{align*}
q^n = o_\psi (y_d) &\leq o_\Psi (Q_d) \\
&= |\mathcal{O}_\Psi (P_d)| \\
&\leq |\PP W_d| \\
&\leq \frac{q^n - 1}{q-1} < q^n,
\end{align*}
a contradiction. This proves that $d(M) v_0 \in W \setminus \set{0}$ for any $d (T) \in \F_q[T]$ such that $d(T) \mid \mu_M (T)$, and $d(T) \neq 1, \mu_M (T)$.

Recall that we want to prove that $\mu_M (T) = p(T)^e$ for some irreducible polynomial $p(T) \in \F_q [T]$ and some integer $e \geq 1$. Assume then by contradiction that there exist $p_1 (T), p_2 (T) \in \F_q[T]$ distinct irreducible polynomials such that $p_1(T), p_2 (T) \mid \mu_M (T)$. Then, by B\'ezout's identity, there exist $a(T), b(T) \in \F_q [T]$ such that $a (T) p_1 (T) + b(T) p_2 (T) = 1$, and so $a(M) p_1 (M) v_0 + b(M) p_2 (M) v_0 = v_0$. Now, $p_i (M) v_0 \in W$ for $i \in \set{1, 2}$ by the claim above, and so $v_0 \in W$, as $W$ is an $\F_q [M]$-invariant subspace of $W$, which is a contradiction. Therefore, we conclude that $\mu_M (T) = p(T)^e$ for some irreducible $p(T) \in \F_q [T]$ and some $e \geq 1$.

We finally show that $\mu_M (T)$ is irreducible, that is $e = 1$. Let us set $f = \deg p(T)$, and let $[[ T ]]$ be the class of $T$ in $G_\Psi$. We want to show that the order of $[[T]]$ in $G_\Psi$ divides
\[ A(q, e, f) = q^{\lceil \log_q e \rceil}\frac{q^f-1}{q-1}. \]
Let $[T]$ be the class of $T$ in $\F_q[M]^*$. As $\F_q[M]^* \cong (\F_q [T] / (p(T)^e))^*$, by Lemma \ref{orders_in_quotients} it is enough to show that the order of $[T]^{q-1}$ in $\F_q [M]^*$ divides $A(q, e, f)$. Now, since $[T]^{q^f-1} \equiv 1 \mod p(T)$, we have $[T]^{q^f-1} = 1 + k(T) p(T)$ for some $k(T) \in \F_q [T]$, and so
\begin{align*}
([T]^{q-1})^{A(q, e, f)} &= ([T]^{q^f-1})^{q^{\lceil \log_q e \rceil}} \\
&= [1 + k(T)p(T)]^{q^{\lceil \log_q e \rceil}} \\
&= [1 + k(T)^{q^{\lceil \log_q e \rceil}} p(T)^{q^{\lceil \log_q e \rceil}}] = 1 \quad \text{in } \F_q[M]^*,
\end{align*}
as $q^{\lceil \log_q e \rceil} \geq e$.

Let $P \in U$, and let $x \in \A^n$ be such that $P = \pi (x)$. Then
\begin{align*}
q^n = o_\psi (x) &\leq o_\Psi (P) \\
&\leq A(q, e, f),
\end{align*}
since the size of $\mathcal{O}_\Psi (P)$ is less or equal than the order of $[[T]]$ in $G_\Psi$. Notice also that here $n = ef-1$, since $\mu_M (T) = p(T)^e$ and $f = \deg p(T)$.

Assume by contradiction that $e \geq 2$. We firstly prove that this forces $f = 1$. Rewrite the inequality $q^{ef-1} \leq A(q, e, f)$ as
\begin{equation} \label{inequality_transitivity}
q^{ef-1 - \lceil \log_q e \rceil} \leq \frac{q^f-1}{q-1}.
 \end{equation}
 Since the quantity
 \[ q^{ef-1 - \lceil \log_q e \rceil} - \frac{q^f-1}{q-1} \]
 is increasing in $e$ and $f$, it is enough to show that \eqref{inequality_transitivity} is never verified for $e = 2$ and $f = 2$. Now, inequality \eqref{inequality_transitivity} for $e = 2$ and $f = 2$ becomes
 \[ q^2 \leq q+1, \]
 which is false for every $q$. Then $f = 1$.
 
 We want now to show that for $f = 1$ the inequality \eqref{inequality_transitivity} forces $q$ to be prime and $n = 1$, or $q = 2$ and $n = 2$, which are exactly the pathological cases we excluded. For $f = 1$, inequality \eqref{inequality_transitivity} becomes
 \[ q^{e-1 - \lceil \log_q e \rceil} \leq 1, \]
 which is equivalent to
 \[ e - 1 - \lceil \log_q e \rceil \leq 0. \]
 The quantity $ e - 1 - \lceil \log_q e \rceil$ is clearly increasing in $e$. Then, for $e \geq 4$ it is enough to show that it never holds for $e = 4$. In this case, in fact, we have $\lceil \log_q 4 \rceil \leq 2$ for every $q$, and so $4 - 1 - \lceil \log_q 4 \rceil \geq 1$ for every $q$. For $e = 3$, in which case $n = 2$, we have $\lceil \log_2 3 \rceil = 2$, and $\lceil \log_q 3 \rceil = 1$ otherwise. Then, the inequality is satisfied for $q = 2$, and never satisfied for $q \neq 2$. Finally, for $e = 2$ we have $n = 1$. Since for $n = 1$ if $\Psi$ sends a point of $U$ to the point at infinity, then $\psi$ transitive gives $\Psi$ transitive by \cite[Proposition 2.6]{bib:AGM17}, and so $e = 1$ by \cite[Theorem 3.4]{bib:AGM17}, a contradiction. We have then that $\Psi$ maps no point of $U$ to the point at infinity, and so $\psi$ is an affine map. But then, since $\psi$ is transitive (and in particular the inequality holds) then $q$ is prime by \cite[Theorem 2.7]{bib:AGM17}. In conclusion, we proved that if $e \geq 2$ then $q$ is prime and $n = 1$, or $q = 2$ and $n = 2$, which are the pathological cases excluded at the beginning. Therefore $e = 1$, and so $\mu_M (T)$ is irreducible.
\end{proof}

%


\section{Artin-Schreier fractional jumps}

Let $q = p$ be a prime number. In this section we consider fractional jumps of automorphisms of $\PP^{p-1}$ defined by companion matrices of Artin-Schreier polynomials
\[ \alpha_{c} (T) = T^p-T-c \in \F_p[T], \quad \text{for } c \in \F_p^*.\]

\begin{proposition}
The polynomial $\alpha_c (T)$ is projectively primitive for every $c \in \F_p^*$.
\end{proposition}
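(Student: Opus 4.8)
My plan is to reduce the statement to a claim about the multiplicative order of a single root, and then to a character‑sum estimate. First, irreducibility is the classical Artin–Schreier fact: if $\alpha$ is a root of $\alpha_c$ then $\alpha^p=\alpha+c$, so inductively $\alpha^{p^j}=\alpha+jc$, and since $c\neq 0$ these $p$ conjugates are pairwise distinct, forcing $[\F_p(\alpha):\F_p]=p$; thus $\alpha_c$ is irreducible and $\F_p(\alpha)\cong\F_{p^p}$. Next I would reduce to the case $c=1$: if $r$ is a root of $\alpha_1=T^p-T-1$, then $(cr)^p-(cr)-c=c(r^p-r-1)=0$, so the roots of $\alpha_c$ are exactly the $\F_p^*$‑multiples of those of $\alpha_1$; in particular a root of $\alpha_c$ and the corresponding root of $\alpha_1$ have the same image in $G=\F_{p^p}^*/\F_p^*$, so $\alpha_c$ is projectively primitive if and only if $\alpha_1$ is.

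So fix a root $r$ of $T^p-T-1$. From $r^{p^j}=r+j$ one gets
\[
N_{\F_{p^p}/\F_p}(r)=\prod_{j=0}^{p-1}(r+j)=\prod_{t\in\F_p}(r-t)=r^p-r=1 ,
\]
so $r$ lies in the kernel $K=\ker\!\big(N\colon\F_{p^p}^*\to\F_p^*\big)$, a cyclic group of order $N=(p^p-1)/(p-1)$; hence $\mathrm{ord}(r)\mid N$. Since $N\equiv 1\pmod{p-1}$ we have $\gcd(N,p-1)=1$, so by Lemma~\ref{orders_in_quotients} (applied with $e=1$) the order of the class of $r$ in $G$ equals $\mathrm{ord}(r)$. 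It therefore remains to prove $\mathrm{ord}(r)=N$, i.e.\ that $r$ generates the cyclic group $K$; as $K$ is cyclic of order $N$, this is equivalent to $r$ not being an $\ell$‑th power in $\F_{p^p}^*$ for any prime $\ell\mid N$ (equivalently $r^{(p^p-1)/\ell}\neq 1$). Each such $\ell$ satisfies $\ell\equiv 1\pmod p$, because $N=\Phi_p(p)$ and $p\nmid\Phi_p(p)$; hence $\mathrm{ord}_\ell(p)=p$ and $\F_p(\mu_\ell)=\F_{p^p}$.

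The heart of the proof — and the step I expect to be the main obstacle — is showing that for a prime $\ell\mid N$ the root $r$ is not an $\ell$‑th power. I would argue with a multiplicative character $\chi$ of $\F_{p^p}^*$ of order $\ell$: since the conjugates of $r$ are $r,r+1,\dots,r+p-1$ and $\chi(x^p)=\chi(x)^p$, we have $\sum_{j=0}^{p-1}\chi(r+j)=\sum_{j=0}^{p-1}\chi(r)^{p^j}$, and because $\ell\nmid p-1$ this sum of $p$ roots of unity equals $p$ exactly when $\chi(r)=1$, i.e.\ exactly when $r$ is an $\ell$‑th power. Thus it suffices to show
\[
\sum_{x\in\F_{p^p}:\ x^p-x=1}\chi(x)\neq p .
\]
Opening the condition $x^p-x=1$ with the additive characters of $\F_{p^p}$ and using $\psi(x^p)=\psi(x)$, this sum becomes $p^{1-p}\,g(\chi)\,\gamma$, where $g(\chi)$ is a Gauss sum (so $|g(\chi)|=p^{p/2}$ exactly) and $\gamma$ is a Weil‑type sum over $\F_{p^p}$ built from $z^p-z=\prod_{t\in\F_p}(z-t)$, naturally supported on the hyperplane $\ker\mathrm{Tr}=\wp(\F_{p^p})$ (on which the relevant additive character is nontrivial). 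The plain Weil bound only gives $|\gamma|\le p^{p/2}$, hence $\big|\sum_{x:\,x^p-x=1}\chi(x)\big|\le p$, which is not quite enough; but a second‑moment computation for $\gamma$ (evaluate $|\gamma|^2$ by the usual diagonalisation over $t\in\F_{p^p}^*$, for which the main term is the diagonal $t=1$ and the remaining terms come from a bounded, low‑dimensional set of degenerate $t$'s) should yield $|\gamma|\le p^{(p-1)/2}$, hence $\big|\sum_{x:\,x^p-x=1}\chi(x)\big|\le\sqrt p<p$, so the sum cannot equal $p$ and $r$ is not an $\ell$‑th power. (When $N$ is prime — in particular for $p=2,3$ — this last step is immediate since $r\notin\F_p$, and the handful of remaining small $p$ for which the constant in the estimate might be delicate can be checked directly.) Pinning down the degenerate terms in $|\gamma|^2$ is where I expect the bulk of the work to be.
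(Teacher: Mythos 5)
Your preliminary reductions are sound and partly parallel the paper's: irreducibility, the scaling $r\mapsto cr$ identifying the projective classes of the roots of the various $\alpha_c$, the norm computation $N_{\F_{p^p}/\F_p}(r)=r^p-r=1$ showing $\mathrm{ord}(r)\mid N=(p^p-1)/(p-1)$, and the observation that $\gcd(N,p-1)=1$ reduces projective primitivity to $\mathrm{ord}(r)=N$. But the decisive step --- that $r$ is not an $\ell$-th power for any prime $\ell\mid N$ --- is exactly the hard arithmetic content, and your proposal does not prove it. The character-sum reformulation is circular: $\sum_{j}\chi(r+j)=p$ if and only if $\chi(r)=1$, so ``the sum is $\neq p$'' \emph{is} the statement to be proved, and any bound strictly below $p$ would settle it. As you note, the Weil bound returns exactly $p$. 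The improvement $|\gamma|\le p^{(p-1)/2}$ you hope to extract from a second moment cannot come out of the computation you sketch: in the expansion of $|\gamma|^2$ the terms $u=s+t$ with $t\in\F_p$ satisfy $u^p-u=s^p-s$ and $\psi(\pm t)=1$ (since $\mathrm{Tr}_{\F_{p^p}/\F_p}(t)=pt=0$), so on any reasonable normalisation the diagonal alone contributes about $p\cdot p^{p}=p^{p+1}$, forcing $|\gamma|$ to be of size roughly $p^{(p+1)/2}$ unless the off-diagonal terms cancel the diagonal --- which no Weil-type estimate will give you. The quantities $\sum_j\chi(r)^{p^j}$ are Gauss periods, and there is no general archimedean bound of the strength you need; primitivity statements for $T^p-T-a$ are proved with $p$-adic (Stickelberger-type) information about Gauss sums, not absolute-value estimates.

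The paper sidesteps all of this by quoting a published theorem (Cao, Theorem 1.2 of the cited paper): $T^p-T-a$ is primitive over $\F_p$ when $a$ generates $\F_p^*$. Choosing $c'$ so that $c/c'$ is a primitive root, $T^p-T-c/c'$ is primitive, hence projectively primitive, and the substitution $T\mapsto c'T$ (your same scaling trick) transfers projective primitivity to $\alpha_c$. To make your argument self-contained you would have to reprove that theorem; as written, your proof has an unfilled gap at its core, and the route you propose for closing it does not work.
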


\begin{proof}
Notice that it is well known that the polynomials $\alpha_c(T)$ are irreducible for every $c \in \F_p^*$ by the theory of Artin-Schreier extensions. Let now $c \in \F_p^*$ be fixed. We want to show that $\alpha_c (T)$ is projectively primitive. Let $c' \in \F_p^*$ be such that $c/c'$ generates $\F_p^*$. Then, the polynomial $T^p-T-c/c'$ is primitive by \cite[Theorem 1.2]{bib:cao10}, and so projectively primitive. Now, this implies that the polynomial $c' T^p - c' T - c = (c' T)^p - c'T - c$ is projectively primitive, and so $\alpha_c (T)$ is projectively primitive.
\end{proof}

Fix $c \in \F_p^*$, let $M \in \GL_p (\F_q)$ be the companion matrix of $\alpha_c (T)$, let $\Psi = [M]$, and let $\psi$ be the fractional jump of $\Psi$. Let $x_0 \in \A^{p-1}$, and let $\{ x^{(k)} \}_{k \in \N}$ be the sequence recursively defined by $x^{(k+1)} = \psi (x^{(k)})$. By \cite[Theorem 3.4]{bib:AGM17} we know that the sequence $\{ x^{(k)} \}_{k \in \N}$ has full orbit.

\subsection{Explicit description} 
In what follows we want to give the explicit description of the Artin-Schreier fractional jump $\psi$.

For $i \in \set{1, \ldots, p-1}$ we have that
\[ M^i= \left( \begin{array}{c|c}
0_{i, p-i} & J_i (c)^t \\
\hline
\mathrm{Id}_{p-i} & E_{p-i, i}^{(1, i)}
\end{array} \right), \]
where $0_{i, p-i}$ is the $i \times (p-i)$ zero matrix, $J_i (c)^t$ is the transpose of a Jordan block of size $i \times i$ and eigenvalue $c$, that is
\[ J_i(c)^t = \begin{pmatrix}
                 c & 0 & \cdots &  \cdots & 0 & \\
		1 & c & 0 &   &  \vdots & \\
		0 & 1 & c &    & \vdots \\
		\vdots &  & \ddots  & \ddots  & 0\\
		0 &\cdots & 0 & 1 &  c &
                \end{pmatrix}, \]        
the matrix $\Id_{p-i}$ is the $(p-i) \times (p-i)$ identity, and $E_{p-i, i}^{(1, i)}$ is the $(p-i) \times i$ matrix with $(1, i)$-entry equal to $1$, and all the other entries equal to zero. For $i = p$ we clearly have $M^p = M + c \mathrm{Id}_p$. 


Following Remark \ref{Uideflastrows}, let us now compute explicitly the polynomials $b^{(i)}$'s and the sets $U_i$'s, for $i \in \set{1, \ldots, p}$, by looking at the last row of $M^i$. 
\begin{align*}
b^{(i)} &= x_{p-i}, \\
&\text{for } i \in \set{1, \ldots, p-2}, & \\
b^{(p-1)} &= x_1 + 1, \\
b^{(p)} &= x_{p-1} + c, 
\end{align*}
which gives
\begin{align*}
U_1 &= \set{x \in \A^{p-1} \, : \, x_{p-1} \neq 0}, \\
U_i &= \set{x \in \A^{p-1} \, : \, x_{p-i} \neq 0, \text{ and } x_{p-j} = 0, \, \forall j \in \set{1, \ldots, i-1}}, \\
&\text{for } i \in \set{2, \ldots, p-2}, \\
U_{p-1} &= \set{x \in \A^{p-1} \, : \, x_1+1 \neq 0, \text{ and } x_{p-j} = 0, \, \forall j \in \set{1, \ldots, p-2}}, \\
U_p &= \set{x \in \A^{p-1} \, : \, x_{p-1} + c \neq 0, \, x_1 + 1 = 0, \text{ and } x_{p-j} = 0, \, \forall j \in \set{1, \ldots, p-2}} \\
&= \set{(-1, 0, \ldots, 0)}.
\end{align*}

The polynomials $a_j^{(i)}$, for $i \in \set{1, \ldots, p}$ and $j \in \set{1, \ldots, p-1}$, are easily computed as well by looking at the $j$-th row of $M^i$.

\begin{itemize}
\item for $i=1$ we have that
\begin{itemize}
\item if $j=1$ then $a_1^{(1)}=c$, 
\item if $j=2$ then $a_2^{(1)}=x_1 + 1$, 
\item for any $j\in \{3,\dots, p-1\}$ then $a_j^{(1)}=x_{j-1}$.
\end{itemize}  
\item for $i\in \{2,\dots p-1\}$ we have that
\begin{itemize}
\item if $j=1$ then $a_1^{(i)}=cx_{p-i+1}$,
\item if $j\in \{2,\dots, i-1\}$ then $a_j^{(i)}=x_{p-i+j-1}+cx_{p-i+j}$,
\item if $j = i$ then $a_i^{(i)} = x_{p-1} + c$,
\item if $j=i+1$ then $a_{i+1}^{(i)}= x_1+ 1$,
\item if $j\in \{i+2,\dots, p-1\}$ then $a_j^{(i)}= x_{j-i}$.
\end{itemize} 
\item for $i=p$ we have that
\begin{itemize}
\item if $j=1$ then $a^{(p)}_{1}=cx_1+ c$,
\item if $j=2$ then $a^{(p)}_{2}=x_1+ c x_2 + 1$,
\item if $j\in \{3,\dots,p-1\}$ then $a^{(p)}_{j}=x_{j-1} + c x_j$.
\end{itemize}
\end{itemize}

By Theorem \ref{explicit_form} this provides the explicit structure of $\psi$.

\subsection{Computational complexity}

Now that we have the explicit description of the fractional jump, we are ready to  establish the expected complexity of computing a random term in the sequence $\{ x^{(k)} \}_{k \in \N}$ given by iterating the Artin-Schreier fractional jump $\psi$.

The expected complexity of computing  $x^{(k+1)}$ given a term $x^{(k)}$ chosen uniformly at random in the sequence is
\[ \mathbb{E}= \sum_{i = 1}^{p} p_i c_i,\]
where $p_i$ is the probability that $x^{(k)} \in U_i$, which is
\[ p_i = \begin{cases}
p^{-i} (p-1), & \text{if } i \in \set{1, \ldots, p-1}, \\
p^{1-p}, & \text{if } i = p,
\end{cases}\]
and $c_i$ is the complexity of evaluating $\psi$ at $x^{(k)}$ when $x^{(k)} \in U_i$.

We want now to evaluate $c_i$ for $i \in \set{1, \ldots, p}$. If $x^{(k)} \in U_i$, the number of sums needed to compute $x^{(k+1)} = \psi(x^{(k)}) = f^{(i)} (x^{(k)})$ is $s_i=\sum^p_{j=1} (r^{(i)}_j-1)$, where $r^{(i)}_j$ is  the number of  non-zero entries in the $j$-th row of the matrix $M^i$.

Since the denominators of the components of $f^{(i)}$ are all equal, the number of inversions needed is always $1$.

 Also, the number of multiplications needed is given by the number $m_i$ of entries different from $0$ and $1$ in the $p\times (p-1)$ submatrix of $M^i$ given by dropping the last column (this can be seen as the last component of the projective point is set to $1$ in the fractional jump) plus the number of multiplications of $b^{(i)}(x^{(k)})^{-1}$ by the $a_j^{(i)}$'s, which is simply $p-1$.
 
Since the length of the orbit $p^{p-1}$ is superexponential, the size of $p$ can be chosen relatively small in such a way that one can build look-up tables for the operations in $\mathbb F_p$ (so they will all have the same cost) and still get a huge orbit. Therefore

\[c_i=\underbrace{s_i}_{\text{sums}}+\underbrace{1}_{\text{inversions}}+\underbrace{m_i+p-1}_{\text{multiplications}}.\]

It remains to compute $s_i$ and $m_i$. Given the explicit description previously provided, we have $s_i=i$ for $i \in \set{1, \ldots, p-1}$ and $s_p = p+1$, and $m_i=i-1$ for $i \in \set{1, \ldots, p}$. Therefore, we have $c_i=p+2i-1$ for $i \in \set{1, \ldots, p-1}$ and $c_p = 3p$. 

The expected complexity is then
\begin{align*}
\mathbb{E} &= 3 p^{2-p} + \sum_{i = 1}^{p-1} p^{-i} (p-1)(p+2i-1) \\
&= 3p^{2 - p} - \frac{3p^3 - (p^2 + 1)p^p - 4p^2 + 3p}{p^p(p - 1)} = p + O \bigg( \frac{1}{p} \bigg).
\end{align*}

This means that the expected complexity of computing the $(k+1)$-th vector of the sequence roughly consists of $p$ checks of the look-up tables, one for each component: morally, we are filling out each component of the term of the sequence by directly reading the look-up table, which is why the process is very efficient.

\begin{remark}
Clearly, the expected complexity can be further optimised by using the equations defining the $U_i$'s, but this will not affect the asymptotic behaviour of $\mathbb{E}$.
\end{remark}


\section{Conclusions and further research}

In this paper we proved that the transitivity of the fractional jumps and the transitivity of the projective automorphisms inducing them are equivalent conditions, except from some degenerate cases which are entirely classified. This puts the last stone for the foundational theory of this new construction: for fixed base field and fixed dimension, the problem of finding all transitive fractional jump is now reduced to finding transitive projective automorphisms. 
In addition, using the theory of Artin-Schreier polynomials, we showed that the construction is sistematically feasible when the dimension of the projective space is prime and equal to the characteristic of the field.
The question now arising is:
\begin{question}
Can one give new explicit classes of projectively primitive polynomials?
\end{question}
Such new classes will allow to use companion matrices of such polynomials (or their conjugates) to build full orbit fractional jump sequences. In particular, it would be of interest to do this for any fixed dimension and in characteristic $2$, and with sparse polynomials.


\section*{Acknowledgment}

The authors are grateful to Andrea Ferraguti for preliminary reading of this manuscript, and for useful discussions and suggestions. The second author is thankful to the Swiss National Science Foundation grant number 171248.


\bibliographystyle{abbrv}
\bibliography{Bibliography.bib}

\begin{thebibliography}{10}

\bibitem{bib:AGM17}
F.~{Amadio Guidi}, S.~Lindqvist, and G.~Micheli.
\newblock Full orbit sequences in affine spaces via fractional jumps and
  pseudorandom number generation.
\newblock {\em arXiv preprint arXiv:1712.05258v2}, 2017.

\bibitem{bib:BW05}
N.~Brandst\"{a}tter and A.~Winterhof.
\newblock Some notes on the two-prime generator of order 2.
\newblock {\em IEEE Trans. Inform. Theory}, 51(10):3654--3657, 2005.

\bibitem{bib:cao10}
X.~Cao.
\newblock On the order of the polynomial $x^p-x-a$.
\newblock {\em Cryptology ePrint Archive, Report 2010/034}, 2010.
\newblock \url{https://eprint.iacr.org/2010/034.pdf}.

\bibitem{bib:chou95}
W.-S. Chou.
\newblock On inversive maximal period polynomials over finite fields.
\newblock {\em Appl. Algebra Engrg. Comm. Comput.}, 6(4):245--250, 1995.

\bibitem{bib:eich91}
J.~Eichenauer-Herrmann.
\newblock Inversive congruential pseudorandom numbers avoid the planes.
\newblock {\em Math. Comp.}, 56(193):297--301, 1991.

\bibitem{bib:EMG09}
E.~D. El-Mahassni and D.~G{\'o}mez-P{\'e}rez.
\newblock On the distribution of nonlinear congruential pseudorandom numbers of
  higher orders in residue rings.
\newblock In {\em Applied Algebra, Algebraic Algorithms, and Error-Correcting
  Codes}, pages 195--203. Springer, 2009.

\bibitem{bib:FMS16}
A.~Ferraguti, G.~Micheli, and R.~Schnyder.
\newblock On sets of irreducible polynomials closed by composition.
\newblock In {\em International Workshop on the Arithmetic of Finite Fields},
  Lecture Notes in Comput. Sci., pages 77--83. Springer, 2016.

\bibitem{bib:GPOS14}
D.~G{\'o}mez-P{\'e}rez, A.~Ostafe, and I.~E. Shparlinski.
\newblock Algebraic entropy, automorphisms and sparsity of algebraic dynamical
  systems and pseudorandom number generators.
\newblock {\em Math. Comp.}, 83(287):1535--1550, 2014.

\bibitem{bib:GSW03}
J.~Gutierrez, I.~E. Shparlinski, and A.~Winterhof.
\newblock On the linear and nonlinear complexity profile of nonlinear
  pseudorandom number generators.
\newblock {\em IEEE Trans. Inform. Theory}, 49(1):60--64, 2003.

\bibitem{bib:HBM17}
D.~R. Heath-Brown and G.~Micheli.
\newblock Irreducible polynomials over finite fields produced by composition of
  quadratics.
\newblock {\em arXiv preprint arXiv:1701.05031}, 2017.

\bibitem{bib:lang02}
S.~Lang.
\newblock {\em Algebra - Revised Third Edition}, volume 211 of {\em Grad. Texts
  in Math.}
\newblock Springer-Verlag, New York, 2002.

\bibitem{bib:NS02}
H.~Niederreiter and I.~E. Shparlinski.
\newblock Recent advances in the theory of nonlinear pseudorandom number
  generators.
\newblock In {\em Monte Carlo and Quasi-Monte Carlo Methods 2000}, pages
  86--102. Springer, 2002.

\bibitem{bib:NS03}
H.~Niederreiter and I.~E. Shparlinski.
\newblock Dynamical systems generated by rational functions.
\newblock In {\em International Symposium on Applied Algebra, Algebraic
  Algorithms, and Error-Correcting Codes}, pages 6--17. Springer, 2003.

\bibitem{bib:ost10}
A.~Ostafe.
\newblock Pseudorandom vector sequences derived from triangular polynomial
  systems with constant multipliers.
\newblock In {\em International Workshop on the Arithmetic of Finite Fields},
  Lecture Notes in Comput. Sci., pages 62--72. Springer, 2010.

\bibitem{bib:OPS10}
A.~Ostafe, E.~Pelican, and I.~E. Shparlinski.
\newblock On pseudorandom numbers from multivariate polynomial systems.
\newblock {\em Finite Fields Appl.}, 16(5):320--328, 2010.

\bibitem{bib:OS10degree}
A.~Ostafe and I.~E. Shparlinski.
\newblock On the degree growth in some polynomial dynamical systems and
  nonlinear pseudorandom number generators.
\newblock {\em Math. Comp.}, 79(269):501--511, 2010.

\bibitem{bib:OS10length}
A.~Ostafe and I.~E. Shparlinski.
\newblock On the length of critical orbits of stable quadratic polynomials.
\newblock {\em Proc. Amer. Math. Soc.}, 138(8):2653--2656, 2010.

\bibitem{bib:TW06}
A.~Topuzo{\u{g}}lu and A.~Winterhof.
\newblock Pseudorandom sequences.
\newblock In A.~Garcia and H.~Stichtenoth, editors, {\em Topics in Geometry,
  Coding Theory and Cryptography}, pages 135--166, Dordrecht, 2006. Springer
  Netherlands.

\bibitem{bib:winterhof10}
A.~Winterhof.
\newblock Recent results on recursive nonlinear pseudorandom number generators.
\newblock In {\em SETA}, pages 113--124. Springer, 2010.

\end{thebibliography}

\end{document}